\newcommand{\Cdb}{\ensuremath{\mathbb{C}}}
\newcommand{\A}{\mbox{${\mathcal A}$}}
\newcommand{\B}{\mbox{${\mathcal B}$}}
\newcommand{\D}{\mbox{${\mathcal D}$}}
\renewcommand{\S}{\mbox{${\mathcal S}$}}
\newcommand{\norm}[1]{\Vert#1\Vert}
\newcommand{\cbnorm}[1]{\Vert#1\Vert_{cb}}
\newcommand{\decnorm}[1]{\Vert#1\Vert_{dec}}
\newcommand{\ma}{\otimes_{\rm max}}
\newtheorem{theorem}{Theorem}[section]
\newtheorem{lemma}[theorem]{Lemma}
\newtheorem{corollary}[theorem]{Corollary}
\newtheorem{proposition}[theorem]{Proposition}
\theoremstyle{remark}
\newtheorem{remark}[theorem]{\bf Remark}
\theoremstyle{definition}
\numberwithin{equation}{section}
\begin{document}

\title[]{Decomposability of bimodule maps}

\author{Christian Le Merdy and Lina Oliveira}
\address{Laboratoire de Math\'ematiques\\ Universit\'e de  Franche-Comt\'e
\\ 25030 Besan\c con Cedex\\ France}
\email{clemerdy@univ-fcomte.fr}
\address{Center for Mathematical Analysis,
Geometry and Dynamical Systems
{\sl and}
Department  of Mathematics\\Instituto Superior 
T\'ecnico\\
Universidade de Lisboa\\
Av. Rovisco Pais\\
1049-001 Lisboa\\
Portugal}
\email{linaoliv@math.ist.utl.pt}

\date{\today}

\thanks{The first author is supported by the research program ANR 2011 BS01 008 01. The second author is partially supported by CAMGSD-LARSyS through the FCT Program POCTI-FEDER and 
 by the FCT project EXCL/MAT-GEO/0222/2012.}

\begin{abstract} Consider a unital $C^*$-algebra $A$, a von Neumann algebra $M$,
a unital sub-$C^*$-algebra $C\subset A$ and a unital $*$-homomorphism $\pi\colon C\to M$.
Let $u\colon A\to M$ be a decomposable map (i.e. a linear combination of completely positive maps)
which is a $C$-bimodule map with respect to $\pi$. We show that $u$ is 
a linear combination of $C$-bimodule completely positive maps if and only if there exists a 
projection $e\in \pi(C)'$ such that $u$ is valued in $eMe$ and $e\pi(\cdotp)e$ 
has a completely positive extension
$A\to eMe$.
\end{abstract}

\maketitle

\bigskip\noindent
{\it 2000 Mathematics Subject Classification : 46L07, 46L05.}

\bigskip
\section{Introduction and preliminaries}

Let $A$ be a unital $C^*$-algebra, let $C\subset A$ be a 
sub-$C^*$-algebra containing the unit,
let $M$ be another unital $C^*$-algebra and let
$\pi\colon C\to M$ be a unital $*$-homomorphism.
We say that a bounded linear map $u\colon A\to M$ 
is a $C$-bimodule map with respect to $\pi$ if
\begin{equation}\label{Bim}
u(c_1ac_2)=\pi(c_1)u(a)\pi(c_2)
\end{equation}
for any $a\in A$ and any $c_1,c_2\in C$. In the sequel we will
simply say a `$C$-bimodule map' if the homomorphism 
$\pi$ to which it refers is clear.

It was shown in \cite{W} that if $M$ is injective, 
then any $C$-bimodule completely bounded map $u\colon A\to M$
can be decomposed as a linear combination of four completely positive
$C$-bimodule maps from $A$ into $M$. 
The aim of this note is to extend that result to the more general
context of decomposable operators. Note that in the sequel, $M$ is no 
longer assumed to be injective.

\bigskip
We will assume that the reader is familiar with the notions of 
completely positive maps and completely bounded 
maps, see \cite{Pa} for an introduction.
A linear map $u\colon A\to M$ is called decomposable if there exist four completely positive 
maps $u_1,u_2,u_3,u_4\colon A\to M$ such that $u=(u_1-u_2)+i(u_3-u_4)$. Let $u_*\colon A\to M$ 
be defined by $u_*(a)=u(a^*)^*$. According to
\cite{H}, $u$ is decomposable if and only if there exist two
completely positive maps $S_1,S_2\colon A \to M$ such that the mapping
$$
\begin{pmatrix} a_1 & t \\ s & a_2 \end{pmatrix}\,\mapsto\,
\begin{pmatrix} S_1(a_1) & u(t) \\ u_*(s) & S_2(a_2)\end{pmatrix}
$$
from $M_2(A)$ into $M_2(M)$ is completely positive. Furthermore letting
$\decnorm{u}=\inf\bigl\{\norm{S_1},\norm{S_2}\bigr\}$ with infimum
taken over all possible pairs $(S_1,S_2)$ satisfying the above property 
defines a norm on the space of decomposable operators. Moreover
$\cbnorm{u}\leq\decnorm{u}$ for any
decomposable operator $u\colon A\to M$ and $\cbnorm{u} =\decnorm{u}$
when $M$ is injective.

A natural question is whether a $C$-bimodule decomposable map
from $A$ into $M$ can be decomposed as a linear combination of 
completely positive $C$-bimodule maps $A\to M$. We do not know if this 
always holds true. In the case when
$M$ is a von Neumann algebra, we will show that 
this holds when $\pi$ extends to a completely positive map
from $A$ into $M$. This extension property is automatically satisfied
when $M$ is injective (so that our result is formally an extension
of Wittstock's Theorem) and also when $C$ has the weak expectation property
(see Corollary \ref{Wep}). Further we will show in Proposition \ref{e1} 
that this extension property is somehow unavoidable.

We end this section with a $C$-bimodule version of the $2\times 2$ matrix characterization
of decomposability reviewed above. Its proof is similar to the one in the classical case so we omit
it. We regard the direct sum $C\oplus C$ as a unital sub-$C^*$-algebra of $M_2(A)$ by identifying
that algebra with the set of diagonal matrices with entries in $C$. Then we let $\pi\oplus\pi\colon 
C\oplus C\to M_2(M)$ be the $*$-homomorphism sending 
$\begin{pmatrix} c_1 & 0 \\ 0 & c_2 \end{pmatrix}$ to 
$\begin{pmatrix} \pi(c_1) & 0 \\ 0 & \pi(c_2) \end{pmatrix}$ for any $c_1,c_2$ in $C$.

\begin{lemma}\label{dec}
Let $u\colon A\to M$ be a bounded linear map, the following assertions are equivalent.
\begin{itemize}
\item [(i)] There exist $C$-bimodule completely positive maps
$u_1,u_2,u_3,u_4\colon A\to M$ such that $u=(u_1-u_2) + i(u_3-u_4)$.
\item [(ii)] There exist two $C$-bimodule completely positive
maps $S_1,S_2\colon A \to M$ such that 
$$
\begin{pmatrix} a_1 & t \\ s & a_2 \end{pmatrix}\,\mapsto\,
\begin{pmatrix} S_1(a_1) & u(t) \\ u_*(s) & S_2(a_2)\end{pmatrix}
$$
is a completely positive map from $M_2(A)$ into $M_2(M)$.
\item [(iii)] There exist two bounded
maps $S_1,S_2\colon A \to M$ such that the map
$$
\begin{pmatrix} a_1 & t \\ s & a_2 \end{pmatrix}\,\mapsto\,
\begin{pmatrix} S_1(a_1) & u(t) \\ u_*(s) & S_2(a_2)\end{pmatrix}
$$
from $M_2(A)$ into $M_2(M)$ is completely positive and a $(C\oplus C)$-bimodule map
with respect to $\pi\oplus\pi$.
\end{itemize}
\end{lemma}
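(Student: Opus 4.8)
My plan is to establish the cycle $(i)\Rightarrow(ii)\Rightarrow(iii)\Rightarrow(i)$, following Haagerup's classical argument from \cite{H} and merely keeping track of the bimodule identities at each step. I will use freely the following standard facts: the entrywise amplification $\widetilde v\colon M_2(A)\to M_2(M)$, $[a_{kl}]\mapsto[v(a_{kl})]$, of a completely positive map $v\colon A\to M$ is completely positive; for any scalar matrix $W\in M_2(\Cdb)$ the map $x\mapsto W^*xW$ on $M_2(A)$ is completely positive, and is a $*$-isomorphism when $W$ is unitary; sums and compositions of completely positive maps are completely positive; and $u_*$ is a $C$-bimodule map whenever $u$ is, because $\pi$ is a $*$-homomorphism, so $\pi(c^*)=\pi(c)^*$ and hence $u_*(c_1sc_2)=u\bigl((c_1sc_2)^*\bigr)^*=\pi(c_1)u_*(s)\pi(c_2)$.

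For $(i)\Rightarrow(ii)$, put $S:=u_1+u_2+u_3+u_4$, which is completely positive and a $C$-bimodule map as a sum of such, and take $S_1=S_2=S$. With $D=\mathrm{diag}(1,-1)$, $E=\mathrm{diag}(1,-i)$, $F=\mathrm{diag}(1,i)$, a direct entrywise computation using $u=(u_1-u_2)+i(u_3-u_4)$ and $u_*=(u_1-u_2)-i(u_3-u_4)$ shows that the candidate map
\[
\Phi\colon\begin{pmatrix}a_1&t\\ s&a_2\end{pmatrix}\longmapsto\begin{pmatrix}S(a_1)&u(t)\\ u_*(s)&S(a_2)\end{pmatrix}
\]
coincides with $x\mapsto\widetilde{u_1}(x)+\widetilde{u_2}(DxD)+\widetilde{u_3}(ExE^*)+\widetilde{u_4}(FxF^*)$. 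Each of the four summands is completely positive by the facts above, hence so is $\Phi$.

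For $(ii)\Rightarrow(iii)$ I keep the same $S_1,S_2,\Phi$; it only remains to verify that $\Phi$ is a $(C\oplus C)$-bimodule map with respect to $\pi\oplus\pi$. Expanding $dxd'$ for diagonal matrices $d=\mathrm{diag}(c_1,c_2)$, $d'=\mathrm{diag}(c_1',c_2')$ with entries in $C$, applying $\Phi$ entrywise, and using that $S_1,S_2,u$ and $u_*$ are all $C$-bimodule maps, one gets exactly $(\pi\oplus\pi)(d)\,\Phi(x)\,(\pi\oplus\pi)(d')$. For $(iii)\Rightarrow(i)$, consider $P(a):=\Phi\bigl(\left(\begin{smallmatrix}a&a\\ a&a\end{smallmatrix}\right)\bigr)=\left(\begin{smallmatrix}S_1(a)&u(a)\\ u_*(a)&S_2(a)\end{smallmatrix}\right)\in M_2(M)$; since $a\mapsto\left(\begin{smallmatrix}a&a\\ a&a\end{smallmatrix}\right)$ is the composition of the $*$-homomorphism $a\mapsto\mathrm{diag}(a,a)$ with a conjugation by a scalar matrix, it is completely positive, hence so is $a\mapsto P(a)$, and therefore so is $V_\xi\colon a\mapsto\xi^*P(a)\xi$ for every scalar column $\xi\in\Cdb^2$. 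Taking $\xi$ equal to $\binom{1}{1},\binom{1}{-1},\binom{1}{i},\binom{1}{-i}$ yields completely positive maps $V_1,V_2,V_3,V_4\colon A\to M$ with $4u=V_1-V_2-iV_3+iV_4$, i.e.\ $u=(u_1-u_2)+i(u_3-u_4)$ with $u_1=\tfrac14V_1$, $u_2=\tfrac14V_2$, $u_3=\tfrac14V_4$, $u_4=\tfrac14V_3$; and each $V_j$ is a $C$-bimodule map because $\left(\begin{smallmatrix}c_1ac_2&c_1ac_2\\ c_1ac_2&c_1ac_2\end{smallmatrix}\right)=\mathrm{diag}(c_1,c_1)\left(\begin{smallmatrix}a&a\\ a&a\end{smallmatrix}\right)\mathrm{diag}(c_2,c_2)$, the $(C\oplus C)$-bimodule property of $\Phi$, and $\xi^*\mathrm{diag}(\pi(c_1),\pi(c_1))=\pi(c_1)\xi^*$ together give $V_j(c_1ac_2)=\pi(c_1)V_j(a)\pi(c_2)$.

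I do not expect a serious obstacle: the whole argument is Haagerup's with bookkeeping. The two points that require care are the precise choice of the scalar unitaries $D,E,F$ in $(i)\Rightarrow(ii)$, which must convert the sign/phase pattern of $\left(\begin{smallmatrix}S&u\\ u_*&S\end{smallmatrix}\right)$ into genuinely completely positive pieces, and the verification that amplification, conjugation by scalar matrices, and compression by scalar vectors all respect the bimodule identity — which works precisely because such scalars commute with $\pi(C)\subseteq M$ and because $\pi$ being a $*$-homomorphism forces $u_*$ to be a $C$-bimodule map.
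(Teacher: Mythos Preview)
Your proof is correct and is exactly what the paper has in mind: the authors omit the proof, writing that it ``is similar to the one in the classical case'', and your argument is precisely Haagerup's classical $2\times 2$-matrix characterisation of decomposability with the $C$-bimodule identities tracked through each step. Nothing further is needed.
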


\section{Decomposition into $C$-bimodule completely positive maps}

We will use the following classical lemma (see e.g. \cite[1.3.12]{BLM} or \cite[Lem. 14.3]{P}).

\begin{lemma}\label{lem}
Let $\A,\B$ be $C^*$-algebras, let $\D\subset\B$
be a sub-C$^*$-algebra and let 
$V\colon \B\to \A$ be a contractive completely positive map
such that the restriction $V_{\vert \footnotesize{\D}}$ is a $*$-homomorphism.
Then 
$$
V(bd)=V(b)V(d)\quad\hbox{and} \quad V(db)=V(d)V(b),\qquad b\in \B,\, d \in\D.
$$
\end{lemma}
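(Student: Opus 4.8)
The plan is to show that every element of $\D$ lies in the multiplicative domain of $V$, and to exploit a Stinespring dilation. The first step is to record the Schwarz inequality
$$
V(b)^*V(b)\,\leq\, V(b^*b),\qquad b\in\B,
$$
which holds for any contractive completely positive $V$: applying $V$ entrywise to the positive matrix $\begin{pmatrix}1 & b\\ b^* & b^*b\end{pmatrix}\in M_2(\B)$ gives $\begin{pmatrix}V(1) & V(b)\\ V(b)^* & V(b^*b)\end{pmatrix}\geq 0$, and since $0\leq V(1)\leq 1$ one may enlarge the $(1,1)$-entry to $1$, leaving $\begin{pmatrix}1 & V(b)\\ V(b)^* & V(b^*b)\end{pmatrix}\geq 0$, which is the stated inequality. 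When $\B$ has no unit one first passes to the unitization and extends $V$ to a contractive completely positive map there (or argues with an approximate identity); this is the only point that needs some care.

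Next I would use the hypothesis that $V_{\vert\D}$ is a $*$-homomorphism: for $d\in\D$ it yields $V(d^*d)=V(d^*)V(d)=V(d)^*V(d)$, so equality holds in the Schwarz inequality at every $d\in\D$. To turn this equality into a structural statement I would fix a Stinespring representation $V(\cdotp)=W^*\rho(\cdotp)W$, with $\rho$ a $*$-representation of $\B$ on a Hilbert space $K$ and $W$ a bounded operator into $K$ with $\norm{W}\leq 1$ (identifying $\A$ with a subalgebra of $B(H)$; contractivity of $V$ forces $\norm{W}\leq 1$). Then $V(d^*d)=V(d)^*V(d)$ reads $W^*\rho(d)^*(1-WW^*)\rho(d)W=0$, and since $1-WW^*\geq 0$ this forces $(1-WW^*)^{1/2}\rho(d)W=0$, hence $\rho(d)W=WW^*\rho(d)W$. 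Applying the same to $d^*\in\D$ and taking adjoints gives the companion identity $W^*\rho(d)=W^*\rho(d)WW^*$.

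Finally, for any $b\in\B$ and $d\in\D$ these two identities give
$$
V(bd)=W^*\rho(b)\rho(d)W=W^*\rho(b)\,WW^*\rho(d)W=V(b)V(d)
$$
and symmetrically
$$
V(db)=W^*\rho(d)\rho(b)W=W^*\rho(d)WW^*\,\rho(b)W=V(d)V(b),
$$
which is the assertion. The main (and essentially the only) obstacle is the unitization bookkeeping in the first step; once the Schwarz inequality and a contractive Stinespring dilation are in place, the rest is a short computation.
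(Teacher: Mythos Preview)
The paper does not actually prove this lemma; it simply records it as classical and refers to \cite[1.3.12]{BLM} and \cite[Lem.~14.3]{P}. Your argument is correct and is one of the two standard routes to this result: you establish the Schwarz inequality, observe that elements of $\D$ saturate it, and then use a contractive Stinespring dilation to promote that saturation to the commutation-type identities $\rho(d)W=WW^*\rho(d)W$ and $W^*\rho(d)=W^*\rho(d)WW^*$, from which the multiplicativity follows by a two-line computation.

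For comparison, the version most often presented in the cited references avoids Stinespring altogether: one notes that the sesquilinear map $(x,y)\mapsto V(y^*x)-V(y)^*V(x)$ is positive semidefinite on $\B$ (this is just the Schwarz inequality after polarization), so by the Cauchy--Schwarz inequality any $d$ with $V(d^*d)=V(d)^*V(d)$ automatically satisfies $V(b^*d)=V(b)^*V(d)$ for all $b\in\B$; the companion identity comes from the same argument applied to $d^*$. That route is slightly more self-contained (no dilation, no faithful representation of $\A$), while yours makes the mechanism more transparent geometrically. Either way the content is the same: $\D$ sits inside the multiplicative domain of $V$.
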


Our main result is the following theorem. Its proof combines and extends techniques from
\cite{S} and \cite[Chapter 14]{P}. We will use the maximal tensor product of $C^*$-algebras, see e.g. 
\cite[Chapter 12]{Pa} or \cite[Chapter 11]{P}
for some background.

\begin{theorem}\label{main} Let $M$ be a von Neumann algebra, let $A$ be a unital
$C^*$-algebra, let $C\subset A$ be a sub-$C^*$-algebra containing the unit 
and let $\pi\colon C\to M$ be a unital $*$-homomorphism. 
Let $u\colon A\to M$ be a $C$-bimodule map (with respect to $\pi$) and assume that 
$u$ is decomposable. If $\pi$ admits a completely positive extension
$A\to M$ then there exist $C$-bimodule completely positive maps
$u_1,u_2,u_3,u_4\colon A\to M$ such that 
$u=(u_1-u_2) + i(u_3 -u_4)$.
\end{theorem}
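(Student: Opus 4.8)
My plan is to use Lemma \ref{dec}: by part (iii) it suffices to find bounded maps $S_1,S_2\colon A\to M$ such that the $2\times 2$ amplification is completely positive and a $(C\oplus C)$-bimodule map. Since $u$ is decomposable, Haagerup's characterization recalled above already gives completely positive $S_1,S_2$ making the $2\times 2$ matrix map completely positive; the whole problem is to \emph{repair} $S_1,S_2$ so that, without destroying positivity of the $2\times 2$ map, the diagonal entries become $C$-bimodule maps. The natural device is an averaging/renormalization argument. Let $\Phi\colon A\to M$ be the given completely positive extension of $\pi$; after scaling we may assume $\|\Phi\|\le 1$, so $\Phi(1)$ is a positive contraction commuting with $\pi(C)$ (it lies in $\pi(C)'$ because $\Phi$ extends the $*$-homomorphism $\pi$, by Lemma \ref{lem} applied with $\D=C$). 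Set $e$ to be the range projection of $\Phi(1)$ in the von Neumann algebra $M$; then $e\in\pi(C)'$, $u$ ought to be forced to take values in $eMe$ (since $u(a)=u(1a1)=\pi(1)u(a)\pi(1)$ and the CP domination by $S_i$ pins $u$ into the corner cut out by the support of $S_i(1)$, which one checks is dominated by $e$), and $e\pi(\cdotp)e$ has the CP extension $\Phi$ valued in $eMe$. So the hypothesis is exactly set up to land us in the corner $eMe$, and we may as well assume $e=1$, i.e. $\Phi$ is unital.

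With $\Phi$ unital CP extending $\pi$, the key step is a Stinespring-type dilation combined with the maximal tensor product trick from \cite[Chapter 14]{P}. Dilate $\Phi$ to $\Phi(a)=W^*\rho(a)W$ with $\rho\colon A\to B(K)$ a $*$-representation and $W\colon H\to K$ an isometry (taking $M\subset B(H)$). Because $\Phi|_C=\pi$ is a $*$-homomorphism, minimality forces the compression to behave multiplicatively on $C$, so $W$ intertwines $\pi$ and $\rho|_C$ in the sense that $\rho(c)W=W\pi(c)$ for $c\in C$; this is the multiplicative-domain phenomenon (Lemma \ref{lem}) transported to the dilation. Now consider the completely positive map on $M_2(A)$ given by Haagerup's matrix $\begin{pmatrix} S_1 & u\\ u_* & S_2\end{pmatrix}$; dilate it as well, and use the intertwining on the $C\oplus C$ subalgebra to average. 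Concretely, one forms on the maximal tensor product $A\ma A^{\rm op}$ (or, following \cite{S}, works with the canonical bimodule structure) a state/CP map whose Stinespring space carries commuting copies of $A$ and of the relative commutant, and the $C$-bimodule property of $u$ says precisely that the corresponding bilinear form kills the ideal generated by $c\otimes 1 - 1\otimes c$ modulo $\pi$; pushing $S_1,S_2$ through the GNS construction for this data yields new diagonal maps $S_1',S_2'$ that are automatically $C$-bimodule (by Lemma \ref{lem} again, applied to the compression that produces them) while keeping the $2\times 2$ block map completely positive. Feeding $S_1',S_2'$ into Lemma \ref{dec}(iii) finishes the argument.

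The main obstacle is the middle step: producing the averaged pair $(S_1',S_2')$ that is \emph{simultaneously} $C$-bimodule on the diagonal and still completely positive as a $2\times 2$ block map, without gaining control only of $u$ itself (which is already bimodule). The difficulty is that naive averaging of $S_1$ over the "action of $C$" — e.g. $a\mapsto \pi(c)^*S_1(c^*ac)\pi(c)$ type expressions, or a weak-$*$ limit of such — need not converge and need not preserve the coupling with $u$ that Haagerup's matrix encodes. This is exactly where one needs the von Neumann algebra hypothesis on $M$ (for weak-$*$ compactness to extract limits, as in \cite{S}) and the maximal tensor product / relative commutant machinery of \cite[Chapter 14]{P} (to organize the averaging functorially so that the off-diagonal entry stays equal to $u$). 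Once the dilation is set up so that $W$ intertwines $C$ with its image, the bimodule property of the new maps is a formal consequence of Lemma \ref{lem}; but arranging that dilation compatibly for the full $2\times 2$ system, and checking the limit exists in $M$, is the technical heart of the proof.
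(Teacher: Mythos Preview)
Your high-level framing is right --- produce $C$-bimodule diagonal maps $S_1,S_2$ and invoke Lemma~\ref{dec} --- and you correctly flag the multiplicative-domain lemma and the max-tensor machinery of \cite[Chapter~14]{P}. But the concrete construction you sketch is off. The reduction to $\Phi$ unital is both unnecessary and unjustified: the Haagerup maps $S_i$ come from the decomposability of $u$ and have no a priori relation to $\Phi$, so there is no reason the support of $S_i(1)$ sits under the support $e$ of $\Phi(1)$; the paper never makes this reduction. More importantly, the correct tensor partner is the commutant $M'$, not $A^{\rm op}$. One sets $B=A\ma M'$, lets $D=C\otimes_\alpha M'\subset B$ with $\alpha$ the norm induced from $B$, and forms $w\colon B\to B(H)$ by $w(a\otimes y)=u(a)y$; this is completely contractive exactly because $u$ is decomposable. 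The $C$-bimodule property of $u$ together with $[M,M']=0$ make $w$ a $D$-bimodule map for the $*$-representation $\rho(c\otimes y)=\pi(c)y$, and the sole role of the CP extension $\widehat\pi$ is to certify that $\rho$ is bounded in the $\alpha$-norm (it is the restriction to $D$ of $a\otimes y\mapsto\widehat\pi(a)y$ on $B$). No Stinespring dilation of $\widehat\pi$ is used.

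There is also no averaging or weak-$*$ limit step. With $w$ a $D$-bimodule complete contraction and $\rho$ unital, the Paulsen-system map on $\bigl\{\bigl(\begin{smallmatrix} d_1 & x\\ y & d_2\end{smallmatrix}\bigr): d_i\in D,\ x,y\in B\bigr\}\subset M_2(B)$ sending such a matrix to $\bigl(\begin{smallmatrix}\rho(d_1)&w(x)\\ w_*(y)&\rho(d_2)\end{smallmatrix}\bigr)$ is unital completely positive; Arveson then extends it to $\widehat W\colon M_2(B)\to M_2(B(H))$. Because $\widehat W$ restricts to the $*$-homomorphism $\rho\oplus\rho$ on $D\oplus D$, Lemma~\ref{lem} forces $\widehat W$ to be $(D\oplus D)$-bimodule, hence corner-preserving with completely positive corners $\Gamma_j\colon B\to B(H)$ that are themselves $D$-bimodule. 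Setting $S_j(a)=\Gamma_j(a\otimes 1)$, the $M'$-half of the $D$-bimodule property gives $S_j(a)\in M''=M$, while the $C$-half gives the $C$-bimodule identity $S_j(c_1ac_2)=\pi(c_1)S_j(a)\pi(c_2)$. This is precisely why one tensors with $M'$ rather than $A^{\rm op}$: the same bimodule constraint that repairs the diagonal also pulls the Arveson extension back from $B(H)$ into $M$, something your $A^{\rm op}$ construction cannot do.
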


\begin{proof}
We may assume that $\decnorm{u}=1$. 
Consider $M\subset B(H)$ for some Hilbert space $H$  and let
$M'\subset B(H)$ be the corresponding commutant. Let $w\colon A\otimes M'\to B(H)$
be the linear map taking $a\otimes y$ to $u(a)y$ for any $a\in A$ and any $y\in M'$.
Since $u$ is decomposable, $w$ extends to a completely bounded map
$$
w\colon A\ma M'\longrightarrow B(H),
$$
with $\cbnorm{w}\leq 1$; see \cite[Eq. (11.6) and Thm. 14.1]{P}.

Let $\widehat{\pi}\colon A\to M$ be a completely positive extension of $\pi$ (as given 
by the assumption). Similarly there exists a completely bounded map 
$\sigma\colon A\ma M'\to B(H)$ such that $\sigma(a\otimes y) = \widehat{\pi}(a)y$
for any $a\in A$ and any $y\in M'$.

We let $\alpha$ be the $C^*$-norm on $C\otimes M'$ induced by $A\ma M'$, 
so that 
$$
C\otimes_\alpha M'\subset A\ma  M'. 
$$
For simplicity
we let $B=A\ma M'$ and $D=C\otimes_\alpha M'$.
Next we let 
$$
\rho\colon D \longrightarrow B(H)
$$
be the restriction
of $\sigma$ to $D$. It is clearly a unital $*$-representation.

For any $a\in A$, $c\in C$ and $y,z\in M'$, we have 
\begin{align*}
w \bigl((c\otimes z)(a\otimes y)\bigr)  = w(ca\otimes zy) 
& = u(ca)zy\\
& = \pi(c)u(a)zy\\
& = \pi(c)zu(a)y\\
& = \rho(c\otimes z)w(a\otimes y)
\end{align*}
because $u$ is a $C$-bimodule map and $z$ commutes with $u(a)\in M$.
Likewise we have
$$
w\bigl((a\otimes y)(c\otimes z)\bigr) = w(a\otimes y)\rho(c\otimes z).
$$
By linearity and continuity, this implies that $w$ is a $D$-bimodule map with respect to
$\rho$.

We consider the `bimodule Paulsen system' associated to $D\subset B$, i.e.
$$
\S\,=\biggl\{
\begin{pmatrix} d_1 & x \\ y & d_2 \end{pmatrix}\, :
\, x,y\in B,\ d_1,d_2\in D\biggr\}\,\subset\, M_2(B).
$$
It is well-known that the bimodule property of $w$ and the norm condition 
$\cbnorm{w}\leq 1$ imply that the map $W\colon\S\to M_2(B(H))$ defined by
$$
W\colon \begin{pmatrix} d_1 & x \\ y & d_2\end{pmatrix}
\,\mapsto\, \begin{pmatrix} \rho(d_1) & w(x) \\ w_*(y) & \rho(d_2)\end{pmatrix}
$$
is a unital completely positive map (see e.g. \cite[3.6.1]{BLM}).
By Arveson's extension Theorem (see e.g. \cite[Thm. 7.5]{Pa}), $W$ 
admits a completely positive extension
$$
\widehat{W}\colon M_2(B)\longrightarrow M_2(B(H)). 
$$
Regard $D\oplus D\subset \S$
as the subspace of $2\times 2$ diagonal matrices with entries in $D$. By 
construction, the restriction of $\widehat{W}$ to that unital $C^*$-algebra is the unital
$*$-representation
$$
\rho\oplus\rho\colon D\oplus D\longrightarrow M_2(B(H)).
$$
By Lemma \ref{lem}, this implies that 
$\widehat{W}$ is a $D\oplus D$-bimodule map with respect to
$\rho\oplus\rho$.

In particular $\widehat{W}$ is a $\Cdb\oplus \Cdb$-bimodule map, which implies 
that it is `corner preserving' in the sense of \cite[2.6.15]{BLM}. More precisely,
there exist two completely positive maps $\Gamma_1,\Gamma_2\colon B\to B(H)$ such that
$$
\widehat{W}\begin{pmatrix} b_1 & x \\ y & b_2\end{pmatrix} = 
\begin{pmatrix} \Gamma_1(b_1) & w(x) \\ w_*(y) & \Gamma_2(b_2)\end{pmatrix}
$$
for any $x,y,b_1,b_2$ in $B$ (see \cite[2.6.17]{BLM}). Now the fact that
$\widehat{W}$ is a $D\oplus D$-bimodule map ensures that
$\Gamma_1$ and $\Gamma_2$ are both $D$-bimodule maps (with respect to $\rho$).

For $j=1,2$, we define $S_j\colon A\to B(H)$ by letting 
$S_j(a)=\Gamma_j(a\otimes 1)$ for any $a\in A$. Since 
$w(a\otimes 1) =u(a)$, the linear map
$\Phi\colon M_2(A)\to M_2(B(H))$ given by 
$$
\Phi\colon \begin{pmatrix} a_1 & t \\ s & a_2\end{pmatrix}
\,\mapsto\, \begin{pmatrix} S_1(a_1) & u(t) \\ u_*(s) & S_2(a_2)\end{pmatrix}
$$
is the restriction of $\widehat{W}$ to $A\simeq A\otimes 1$. Thus 
$\Phi$ is completely positive.

Let $a\in A$. For any $z\in M'$ we have
$$
(a\otimes 1)(1\otimes z) = a\otimes z = (1\otimes z)(a\otimes 1).
$$
Since $\Gamma_1$ is a $D$-bimodule map, this implies that 
$$
\Gamma_1(a\otimes 1)\rho(1\otimes z) = \rho(1\otimes z)\Gamma_1(a\otimes 1).
$$
Equivalently, $S_1(a)z=zS_1(a)$. This shows that $S_1(a)\in M$. 
The same holds for $S_2$ and hence $\Phi$ is valued in $M_2(M)$.

According to Lemma \ref{dec}, it therefore suffices to show that $S_1,S_2$ are
$C$-bimodule maps. For that purpose consider $a\in A, c_1,c_2\in C$.
Since $\Gamma_1, \Gamma_2$ are $D$-bimodule maps, we have
\begin{align*}
S_j(c_1ac_2) = \Gamma_j(c_1ac_2\otimes 1)
& = \Gamma_j\bigl((c_1\otimes 1)(a\otimes 1)(c_2\otimes 1)\bigr)\\
& = \rho(c_1\otimes 1)\Gamma_j(a\otimes 1)\rho(c_2\otimes 1)\\
& = \pi(c_1) S_j(a)\pi(c_2),
\end{align*}
which proves the result. 
\end{proof}

If $M$ is injective, then $\pi$ has a completely positive extension
$A\to M$. Hence Theorem \ref{main} reduces to Wittstock's Theorem in this
case.

Let $\kappa_C\colon C\hookrightarrow C^{**}$ be the canonical embedding.
By definition a $C^*$-algebra $C$ has the weak expectation property (WEP in short) provided that
for one (equivalently for any) $C^*$-algebra embedding $C\subset B(K)$, there exists a
contractive and completely positive map $P\colon B(K)\to C^{**}$ whose rectriction 
to $C$ equals $\kappa_C$. This notion goes back to Lance \cite{La}. The class of $C^*$-algebras with 
the WEP includes nuclear ones and injective ones. 

\begin{corollary}\label{Wep} Let $A,C,M,\pi$ as in Theorem \ref{main}.
Assume that $C$ has the WEP. Let $u\colon A\to M$ be a $C$-bimodule decomposable 
map. Then there exist $C$-bimodule completely positive maps
$u_1,u_2,u_3,u_4\colon A\to M$ such that 
$u=(u_1-u_2) + i(u_3 -u_4)$.
\end{corollary}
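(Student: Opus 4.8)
The plan is to deduce Corollary~\ref{Wep} from Theorem~\ref{main}, so the only thing that needs proof is that the hypothesis ``$C$ has the WEP'' forces $\pi$ to admit a completely positive extension $A\to M$. First I would fix a faithful normal representation $M\subset B(H)$ and work with the unital $*$-homomorphism $\pi\colon C\to M\subset B(H)$. Regard $C$ as a sub-$C^*$-algebra of some $B(K)$; by the WEP there is a contractive completely positive map $P\colon B(K)\to C^{**}$ restricting to the canonical embedding $\kappa_C\colon C\to C^{**}$. The idea is to factor a completely positive extension of $\pi$ through this $P$, using that $\pi$ itself extends (as a normal $*$-homomorphism, or at least as a completely positive map) to the bidual.

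The key steps, in order, would be: (1) Use Arveson's extension theorem to extend the unital completely positive map $\pi\colon C\to B(H)$ to a completely positive map $q\colon B(K)\to B(H)$, where $C\subset B(K)$ is the embedding from the WEP. This step does not yet land in $M$. (2) Observe that $\pi$ is normal on $C$ in the trivial sense that it is merely a $*$-homomorphism on the $C^*$-algebra $C$; the relevant point is that $\pi\colon C\to M$ extends uniquely to a normal unital $*$-homomorphism $\widetilde{\pi}\colon C^{**}\to M$, because $M$ is a von Neumann algebra and the universal property of the bidual gives a weak$^*$-continuous $*$-homomorphism $C^{**}\to M^{**}=M$ extending $\pi$. (3) Form the composition
\[
A\hookrightarrow B(K)\xrightarrow{\ P\ }C^{**}\xrightarrow{\ \widetilde{\pi}\ }M,
\]
after first arranging that $A$ (and hence $C\subset A$) sits inside the same $B(K)$ used for the WEP of $C$; one can do this by taking a faithful representation $A\subset B(K)$ and noting that the WEP of $C$ is independent of the chosen embedding, so $P\colon B(K)\to C^{**}$ with $P|_C=\kappa_C$ exists for this $B(K)$ too. (4) Check that $\widehat{\pi}:=\widetilde{\pi}\circ P|_A\colon A\to M$ is completely positive (composition of completely positive maps) and that for $c\in C$ we have $\widehat{\pi}(c)=\widetilde{\pi}(P(c))=\widetilde{\pi}(\kappa_C(c))=\pi(c)$, so it genuinely extends $\pi$. (5) Apply Theorem~\ref{main} with this $\widehat{\pi}$ to conclude.

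The main obstacle I expect is step~(3): making sure the WEP-witnessing map $P$ is available on a $B(K)$ that \emph{simultaneously} contains $A$, so that restricting $P$ to $A$ makes sense. This is handled by the ``one equivalently for any'' clause in the definition of WEP quoted in the excerpt — the existence of such a $P$ is independent of the ambient $B(K)$ — so one simply starts from a faithful unital representation $A\subset B(K)$ and invokes the WEP of $C$ relative to that embedding. A secondary point to be careful about is whether $\widetilde{\pi}$ should be taken on $C^{**}$ or whether it is cleaner to use Arveson directly: in fact one can shortcut steps (1)–(2) by noting $P\colon B(K)\to C^{**}$ is completely positive and contractive, and the normal extension $\widetilde{\pi}\colon C^{**}\to M$ of the unital $*$-homomorphism $\pi$ exists precisely because $M$ is a von Neumann algebra; everything else is formal. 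Thus no serious new idea is needed beyond recognizing that WEP provides exactly the factorization that yields a completely positive extension of $\pi$.
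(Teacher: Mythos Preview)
Your proposal is correct and follows essentially the same route as the paper: embed $A\subset B(K)$, invoke the WEP of $C$ relative to that embedding to obtain a completely positive $P\colon B(K)\to C^{**}$ with $P|_C=\kappa_C$, extend $\pi$ to a weak$^*$-continuous $*$-homomorphism $\widetilde{\pi}\colon C^{**}\to M$ (using that $M$ is a dual algebra---not that ``$M^{**}=M$'', which is false in general), and set $\widehat{\pi}=\widetilde{\pi}\circ P|_A$. The paper's argument is identical, minus your Arveson detour in step~(1), which you yourself correctly note is unnecessary.
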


\begin{proof}
By Theorem \ref{main} it suffices to show that $\pi$ has a completely
positive extension $\widehat{\pi}\colon A\to M$. 
Let $J\colon A\hookrightarrow B(K)$ be a $C^*$-algebra embedding for a suitable
$K$. By assumption there exists a completely positive $P\colon B(K)\to C^{**}$
such that $PJ_{\vert C}=\kappa_C$.

Since $M$ is a dual algebra, $\pi$ admits a (necessarily unique)
$w^*$-continuous extension $\widetilde{\pi}\colon C^{**}\to M$,
which is a $*$-homomorphism (see e.g. \cite[2.5.5]{BLM}). Then
$$
\widehat{\pi} = \widetilde{\pi}\circ P_{\vert A}
$$
is  a completely positive extension of $\pi$.
\end{proof}

We note that the above proof cannot be applied beyond the WEP case.

We now turn to an observation which will imply that the extension
property assumption in Theorem \ref{main} is actually necessary 
for the conclusion to hold, up to changing $M$ into a smaller 
von Neumann algebra containing the range of $u$.

In the rest of this section we consider $A,C,M,\pi$ as in Theorem \ref{main} and we
let $\pi(C)'\subset M$ be the commutant of the range of $\pi$.
If $e\in \pi(C)'$ is a projection, we let $\pi_e\colon C\to eMe$
be defined by $\pi_e(c)=e\pi(c)e$; this is a unital $*$-homomorphism.

\begin{proposition}\label{e1}
Let $\phi\colon A\to M$ be a $C$-bimodule completely positive 
map. Then there exists a projection $e\in\pi(C)'$ such that 
$\phi(A)\subset eMe$ and $\pi_e\colon C\to eMe$ admits a 
completely positive extension $A\to eMe$.
\end{proposition}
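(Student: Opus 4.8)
The plan is to exploit the standard GNS/Stinespring picture for the completely positive bimodule map $\phi$ and use the $C$-bimodule condition to force the relevant support projection to lie in $\pi(C)'$. First I would take a minimal Stinespring dilation of $\phi$ as a completely positive map $A\to M\subset B(H)$: write $\phi(a)=V^*\theta(a)V$ for a $*$-representation $\theta\colon A\to B(K)$ and a bounded $V\colon H\to K$ with $\theta(A)VH$ dense in $K$. The range $\phi(A)\subset M$; set $e\in M$ to be the projection onto the closure of the linear span of $\{\phi(a)\xi : a\in A,\ \xi\in H\}$, equivalently the smallest projection in $M$ with $e\phi(a)=\phi(a)e=\phi(a)$ for all $a$. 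Then automatically $\phi(A)\subset eMe$.

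Next I would check $e\in\pi(C)'$. For $c\in C$ the bimodule identity $\phi(ca)=\pi(c)\phi(a)$ and $\phi(ac)=\phi(a)\pi(c)$ (taking $a$ to range over $A$, and using that $C$ is unital so $\phi(c)=\pi(c)\phi(1)$, etc.) show that the range subspace $e H$ is invariant under $\pi(c)$ and under $\pi(c)^*=\pi(c^*)$; hence $\pi(c)$ commutes with $e$. More carefully: $\pi(c)\phi(a)\xi=\phi(ca)\xi\in eH$ and $\pi(c)^*\phi(a)\xi=\pi(c^*)\phi(a)\xi=\phi(c^*a)\xi\in eH$, so $eH$ reduces $\pi(C)$, giving $e\pi(c)=\pi(c)e$. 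Thus $e\in\pi(C)'$ and $\pi_e(c)=e\pi(c)e=\pi(c)e$ is a well-defined unital $*$-homomorphism $C\to eMe$.

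It remains to produce a completely positive extension $\widehat{\pi_e}\colon A\to eMe$ of $\pi_e$. Here I would use the Stinespring data again. On $K$ we have the representation $\theta$ of $A$; since $\theta(c)VH\subset\overline{\theta(A)VH}=K$ is automatic, the point is to relate $\theta$ on the cyclic subspace to $\pi$. Apply Lemma \ref{lem}: the completely positive contraction $\phi/\|\phi\|$ (or rather, work with the dilation) restricted to $C$ is, up to the compression $V^*(\cdot)V$, multiplicative in the sense that $\phi(c_1 a c_2)=\pi(c_1)\phi(a)\pi(c_2)$, which lets one identify $V^*\theta(c)V$ with $\pi(c)\phi(1)=\phi(1)^{1/2}(\cdot)\phi(1)^{1/2}$-type expressions. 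Concretely I would argue that $\phi(1)\in\pi(C)'$ as well (from $\pi(c)\phi(1)=\phi(c)=\phi(1)\pi(c)$), that $e$ is the support projection of $\phi(1)$ in $M$, and that $c\mapsto V^*\theta(c)V$ agrees with $c\mapsto \pi(c)\phi(1)$; then the map $a\mapsto e\,[V^*\theta(a)V]\,e$, after correcting for the non-unital-ness of $\phi(1)$, is the desired extension. The cleanest route: let $\theta_C$ be the subrepresentation of $\theta$ on $\overline{\theta(C)VH}$; show this is unitarily equivalent (via a partial isometry built from $V$) to a representation containing $\pi$ as a summand; then the Stinespring compression of $\theta|_A$ to the appropriate subspace is a completely positive extension of $\pi_e$ valued in $eMe$.

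The main obstacle I expect is the last step — constructing the extension and verifying it lands in $eMe$ rather than a larger algebra, and in particular checking that the partial isometry intertwining the cyclic subspace of the dilation with the GNS space of $\pi$ is well-defined and implements $\pi_e$ correctly. This requires care precisely because $\phi(1)$ need not be a projection (only its support $e$ is), so one must pass through the polar-decomposition/support-projection argument rather than a naive compression. Once that identification is in place, complete positivity of the extension is immediate from Stinespring, and the bimodule/range conditions have already been arranged.
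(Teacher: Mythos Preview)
Your setup is essentially right: taking $e$ to be the support projection of $b=\phi(1)$ in $M$, showing $b\in\pi(C)'$ via $\pi(c)\phi(1)=\phi(c)=\phi(1)\pi(c)$ and hence $e\in\pi(C)'$, and deducing $\phi(A)\subset eMe$ from $0\le\phi(x)\le b$ for $0\le x\le 1$ --- all of this matches the paper. The Stinespring dilation is not needed for any of these steps.

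The genuine gap is the extension step, which you yourself flag as the obstacle but do not resolve. Your ``cleanest route'' via a subrepresentation $\theta_C$ and a partial isometry intertwining it with something ``containing $\pi$ as a summand'' is too vague to evaluate, and the concern you raise --- that the resulting map might land only in $eB(H)e$ rather than $eMe$ --- is exactly the point that needs an argument. If you write $V=Wb^{1/2}$ by polar decomposition and set $\psi(a)=W^*\theta(a)W$, you get a completely positive map into $eB(H)e$, but nothing in the Stinespring picture tells you $W^*\theta(a)W\in M$.

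The paper bypasses Stinespring entirely and carries out precisely the ``correction for non-unital-ness'' you allude to. Since $b\in\pi(C)'$ one has $\phi(c)=b^{1/2}\pi(c)b^{1/2}$ for $c\in C$. Define
\[
\psi_n(x)=\bigl(b+\tfrac{1}{n}\bigr)^{-1/2}\phi(x)\bigl(b+\tfrac{1}{n}\bigr)^{-1/2},
\]
which is manifestly completely positive and \emph{valued in $M$} because $b,\phi(x)\in M$. By the Choi--Effros argument, $(b+\tfrac{1}{n})^{-1/2}b^{1/2}\to e$ strongly and $\psi_n$ converges point-strongly to a completely positive $\psi\colon A\to eMe$; on $C$ one reads off $\psi_n(c)=(b+\tfrac{1}{n})^{-1/2}b^{1/2}\pi(c)b^{1/2}(b+\tfrac{1}{n})^{-1/2}\to e\pi(c)e=\pi_e(c)$. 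This is the whole proof. Note that if you tried to make your Stinespring/polar-decomposition route rigorous, the only way to show $W^*\theta(a)W\in M$ is to approximate $W$ strongly by $V(b+\tfrac{1}{n})^{-1/2}$ --- which reproduces exactly the $\psi_n$ above. So the dilation adds nothing; work directly inside $M$.
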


\begin{proof}
Let $b=\phi(1)$, then $b$ belongs to $M_+$. Let $e$ be 
the support projection
of $b$. Since $\phi$ is a $C$-bimodule map, we have 
$$
\pi(c) \phi(1) = \phi(c) = \phi(1)\pi(c)
$$
for any $c\in C$. Hence $b\in \pi(C)'$, which implies
that $b^{\frac{1}{2}}\in \pi(C)'$ and $e\in \pi(C)'$. Thus we have
\begin{equation}
\label{CE1}
\phi(c)= b^{\frac{1}{2}} \pi(c)b^{\frac{1}{2}},\qquad c\in C.
\end{equation}

For any integer $n\geq 1$ let $\psi_n\colon A\to M$ be defined by
$$
\psi_n(x) = \bigl(b+\tfrac{1}{n}\bigr)^{-\frac{1}{2}}\phi(x)
\bigl(b+\tfrac{1}{n}\bigr)^{-\frac{1}{2}},\qquad x\in A.
$$ 
According to the proof of \cite[Lem. 2.2]{CE}, 
$\bigl(b+\tfrac{1}{n}\bigr)^{-\frac{1}{2}}b^{\frac{1}{2}}
\to e$ strongly and $\bigl(\psi_n(x)\bigr)_{n\geq 1}$ has a strong limit
for any $x\in A$. Let $\psi\colon A\to M$ be the resulting point-strong
limit of $\psi_n$. Then $\psi$ is completely positive and valued in
$eMe$. By (\ref{CE1}), we have
$$
\psi_n(c) = \bigl(b+\tfrac{1}{n}\bigr)^{-\frac{1}{2}}b^{\frac{1}{2}} 
\pi(c)b^{\frac{1}{2}}
\bigl(b+\tfrac{1}{n}\bigr)^{-\frac{1}{2}}
$$
for any $c\in C$. Hence
$$
\psi(c) =e\pi(c)e,\qquad c\in C,
$$
which means that $\psi$ extends $\pi_e$.

Let $x\in A$ with $0\leq x\leq 1$. By positivity, $0\leq \phi(x)\leq b$,
which implies that $\phi(x)\in eMe$. We deduce that $\phi(A)\subset eMe$.
\end{proof}

\begin{corollary}\label{e2}
Let $u\colon A\to M$ be a linear combination of $C$-bimodules 
completely positive maps.
Then there exists a projection $e\in\pi(C)'$ such that 
$u(A)\subset eMe$ and $\pi_e\colon C\to eMe$ admits a 
completely positive extension $A\to eMe$.
\end{corollary}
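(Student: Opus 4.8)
The plan is to reduce directly to Proposition \ref{e1} by replacing the summands of $u$ by their sum, which is again a $C$-bimodule completely positive map.

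First I would write $u=\sum_{k=1}^{n}\lambda_k u_k$ with $\lambda_1,\dots,\lambda_n\in\Cdb$ and $u_1,\dots,u_n\colon A\to M$ being $C$-bimodule completely positive maps (by Lemma \ref{dec} one may even take $n=4$, though this plays no role). Then I would set $\phi=\sum_{k=1}^{n}u_k$. Since a finite sum of completely positive maps is completely positive and a finite sum of $C$-bimodule maps is a $C$-bimodule map, $\phi\colon A\to M$ is a $C$-bimodule completely positive map, so Proposition \ref{e1} applies to it: there is a projection $e\in\pi(C)'$ with $\phi(A)\subset eMe$ such that $\pi_e\colon C\to eMe$ admits a completely positive extension $A\to eMe$. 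This $e$ is the projection we want; the extension property is already granted, so the only remaining point is to check that $u(A)\subset eMe$.

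For that I would argue exactly as at the end of the proof of Proposition \ref{e1}. Recall that $e$ is the support projection of $b:=\phi(1)\in M_+$, so that $0\le y\le b$ forces $y=eye$. Now fix $k$ and $x\in A$ with $0\le x\le 1$. Each $u_j(x)$ is positive, hence
$$
0\ \le\ u_k(x)\ \le\ \sum_{j=1}^{n}u_j(x)\ =\ \phi(x)\ \le\ \phi(1)\ =\ b,
$$
so $u_k(x)\in eMe$. Since every element of $A$ is a linear combination of (at most four) positive contractions and $eMe$ is a linear subspace, this yields $u_k(A)\subset eMe$ for every $k$, and therefore $u(A)\subset eMe$.

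I do not expect a genuine obstacle here; the one point worth flagging is that one should not try to amalgamate the support projections of the individual $u_k$ (a sum, or even a supremum, of projections behaves less cleanly), but instead work with the single dominating map $\phi$, for which the inequality $0\le u_k\le\phi$ on positive elements squeezes each $u_k$, and hence $u$, into the corner $eMe$ for free.
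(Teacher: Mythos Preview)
Your proof is correct and follows essentially the same route as the paper: form the single $C$-bimodule completely positive map $\phi=\sum_k u_k$, apply Proposition~\ref{e1} to obtain the projection $e$, and then use the domination $0\le u_k(x)\le\phi(x)$ on positive contractions to conclude $u_k(A)\subset eMe$ and hence $u(A)\subset eMe$. The only cosmetic difference is that the paper writes $u=(u_1-u_2)+i(u_3-u_4)$ rather than a general linear combination, which changes nothing.
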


\begin{proof}
By assumption, we may write $u=(u_1-u_2) + i(u_3 -u_4)$, where
for any $j=1,\ldots,4$, $u_j$ is a $C$-bimodule completely positive map.
Then the sum 
$$
\phi= u_1 + u_2 + u_3 + u_4
$$
is a $C$-bimodule completely positive map.
Applying Proposition \ref{e1}, we obtain a projection $e\in\pi(C)'$ such that 
$\pi_e\colon C\to eMe$ admits a 
unital completely positive extension to $A$ and $\phi(A)\subset eMe$.

Let $x\in A$ with $0\leq x\leq 1$. For any $j$, we have $0\leq u_j(x)\leq \phi(x)$, 
hence $u_j(x)\in eMe$. Thus $u_j(A)\subset eMe$, and hence $u(A)\subset eMe$.
\end{proof}

Combining Theorem \ref{main} and Corollary \ref{e2}, we obtain the following characterization.

\begin{corollary}\label{p_converse}
Let $u\colon A\to M$ be a $C$-bimodule map and assume 
that $u$ is decomposable. Then the following assertions are equivalent.
\begin{itemize}
\item [(i)] There exist $C$-bimodule completely positive maps 
$u_1,u_2,u_3,u_4\colon A\to M$ such that 
$u=(u_1-u_2) + i(u_3 -u_4)$.

\item [(ii)] There exists a projection $e\in \pi(C)'$ such that 
$\pi_e \colon A \to eMe$ admits a completely positive extension
$\widehat{\pi}_e\colon A\to eMe$ and $u(A)\subset eMe$. 

\end{itemize}
\end{corollary}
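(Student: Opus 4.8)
The plan is to derive Corollary \ref{p_converse} by combining the two directions already available in the excerpt. The statement is an equivalence, so I would prove (i)$\Rightarrow$(ii) and (ii)$\Rightarrow$(i) separately, and in each case I expect the argument to be short because the substance is in Theorem \ref{main} and Corollary \ref{e2}.

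For the implication (i)$\Rightarrow$(ii), I would simply invoke Corollary \ref{e2}: if $u=(u_1-u_2)+i(u_3-u_4)$ with each $u_j$ a $C$-bimodule completely positive map, then Corollary \ref{e2} produces a projection $e\in\pi(C)'$ with $u(A)\subset eMe$ and such that $\pi_e\colon C\to eMe$ has a completely positive extension $A\to eMe$. The only cosmetic point is that the statement of (ii) writes $\pi_e\colon A\to eMe$ where it should read $\pi_e\colon C\to eMe$ (the domain of $\pi_e$ is $C$); I would phrase the conclusion so that it matches, namely that $\pi_e\colon C\to eMe$ admits a completely positive extension $\widehat{\pi}_e\colon A\to eMe$. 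That is exactly what Corollary \ref{e2} delivers, so this direction is immediate.

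For the implication (ii)$\Rightarrow$(i), the idea is to apply Theorem \ref{main} with $M$ replaced by the von Neumann algebra $eMe$. First I would observe that $eMe$ is indeed a von Neumann algebra (it is the reduced von Neumann algebra at the projection $e\in M$) and that it is unital with unit $e$. Since $u(A)\subset eMe$, we may regard $u$ as a map $u\colon A\to eMe$; I would check that $u$ remains decomposable as a map into $eMe$ — this follows because compression by $e$ sends completely positive maps into $M$ to completely positive maps into $eMe$, so the completely positive maps $S_1,S_2$ witnessing decomposability of $u$ into $M$ compress to completely positive maps into $eMe$ witnessing decomposability there (alternatively one notes $eMe$ is a corner, hence a hereditary subalgebra, and a decomposable map between $C^*$-algebras whose range lies in such a subalgebra is decomposable into that subalgebra). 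Next, $u$ is a $C$-bimodule map with respect to $\pi_e=e\pi(\cdot)e\colon C\to eMe$: indeed, since $e\in\pi(C)'$, for $c_1,c_2\in C$ and $a\in A$ we have $\pi(c_1)u(a)\pi(c_2)=\pi(c_1)e u(a) e\pi(c_2)=e\pi(c_1)e\, u(a)\, e\pi(c_2)e=\pi_e(c_1)u(a)\pi_e(c_2)$, using $u(a)\in eMe$ and that $e$ commutes with $\pi(c_1),\pi(c_2)$. Finally, by hypothesis $\pi_e$ admits a completely positive extension $\widehat{\pi}_e\colon A\to eMe$. Thus all hypotheses of Theorem \ref{main} are satisfied with $(M,\pi)$ replaced by $(eMe,\pi_e)$, and the theorem yields $C$-bimodule completely positive maps $u_1,u_2,u_3,u_4\colon A\to eMe$ (with respect to $\pi_e$) with $u=(u_1-u_2)+i(u_3-u_4)$. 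Composing with the inclusion $eMe\hookrightarrow M$, each $u_j$ is a completely positive map $A\to M$, and it is a $C$-bimodule map with respect to $\pi$ because, again using $e\in\pi(C)'$, $\pi(c_1)u_j(a)\pi(c_2)=\pi_e(c_1)u_j(a)\pi_e(c_2)=u_j(c_1ac_2)$. This gives (i).

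The main obstacle, such as it is, is the bookkeeping around passing to the corner $eMe$: verifying that decomposability and complete positivity behave correctly under the compression by $e$ and the inclusion $eMe\hookrightarrow M$, and checking that the $C$-bimodule property transfers in both directions between $\pi$ and $\pi_e$. None of this is deep — it all rests on the fact that $e$ is a projection lying in the commutant of $\pi(C)$ and that $u$ and each $u_j$ take values in $eMe$ — but it is the only place where there is anything to verify, so I would state these transfer facts carefully and then let Theorem \ref{main} and Corollary \ref{e2} do the rest.
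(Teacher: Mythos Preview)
Your proposal is correct and follows exactly the route the paper indicates: the paper merely states that the corollary is obtained by ``combining Theorem \ref{main} and Corollary \ref{e2}'', and you have spelled out precisely that combination, including the routine transfer of decomposability and the $C$-bimodule property to and from the corner $eMe$. Your observation about the typo in the domain of $\pi_e$ in assertion (ii) is also apt.
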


\begin{remark}
It is easy to modify the proof of Theorem \ref{main} to obtain the following extension 
property: consider $A,C,M,\pi$ as in Theorem \ref{main}, let $F\subset A$ be a linear subspace
which is an operator $C$-bimodule in the sense 
that $c_1 fc_2\in F$ for any $f\in F$ and any $c_1,c_2\in C$. Let $u\colon F\to M$
be a $C$-bimodule bounded map. Assume that $u$ admits a decomposable extension
$\widehat{u}\colon A\to M$ and $\pi$ admits a completely positive extension
$\widehat{\pi}\colon A\to M$. Then $u$ admits an extension $v\colon A\to M$
which is a linear  combination of $C$-bimodule completely positive maps
$A\to M$.

Theorem \ref{main} corresponds to the case $F=A$.
In the case when $M$ is injective, the above  result correponds to 
Wittstock's extension Theorem \cite{W} (see also \cite[Thm. 3.6.2]{BLM}).
\end{remark}

\section{Additional remarks}

In this final section we provide supplementary observations on the 
decomposition problem considered in this paper.
We first note a uniqueness property about the $*$-homomorphism $\pi$ with respect to which a 
completely positive map $\phi\colon A\to M$ can be considered as a $C$-bimodule map. In the sequel
we write $e^\perp=1-e$ for a projection $e\in M$.

\begin{corollary}\label{ce3} Consider a von Neumann algebra $M$, a $C^*$-algebra $A$
and a unital sub-$C^*$-algebra $C\subset A$.
Let $\pi_1\colon A\to M$ and $\pi_2\colon A\to M$ be unital $*$-homomorphisms and let 
$\phi\colon A\to M$ be a completely positive map. Assume that $\phi$ is
a $C$-bimodule map with respect to $\pi_1$. Then the following assertions are equivalent.
\begin{itemize}
\item  [(i)] $\phi$ is
a $C$-bimodule map with respect to $\pi_2$; 
\item  [(ii)] There exists a projection $e\in \pi_1(C)^\prime\cap \pi_2(C)^\prime$ 
such that $\phi(A)\subset eMe$ and $(\pi_1-\pi_2)(C)\subset e^\perp Me^\perp$.
\end{itemize}
\end{corollary}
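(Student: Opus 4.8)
The plan is to establish the two implications separately; both are short and lean on (the proof of) Proposition~\ref{e1}.

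Consider first $(i)\Rightarrow(ii)$. I would put $b=\phi(1)\in M_+$ and let $e$ be the support projection of $b$. Exactly as at the start of the proof of Proposition~\ref{e1}, the bimodule property of $\phi$ with respect to $\pi_i$ yields $\pi_i(c)b=\phi(c)=b\pi_i(c)$ for every $c\in C$ and $i=1,2$; in particular $b\in\pi_1(C)'\cap\pi_2(C)'$, and hence the support projection $e$ lies in $\pi_1(C)'\cap\pi_2(C)'$ as well. Applying Proposition~\ref{e1} with $\pi=\pi_1$ gives $\phi(A)\subset eMe$. It therefore only remains to prove $(\pi_1-\pi_2)(C)\subset e^\perp Me^\perp$. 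Subtracting the identities $\pi_1(c)b=\phi(c)=\pi_2(c)b$ shows that $\bigl(\pi_1(c)-\pi_2(c)\bigr)b=0$ for every $c\in C$. Realizing $M\subset B(H)$ via a faithful normal representation, $e$ is the orthogonal projection onto $\overline{bH}$, so an operator that annihilates $bH$ annihilates $\overline{bH}=eH$; hence $\bigl(\pi_1(c)-\pi_2(c)\bigr)e=0$, and since $e$ commutes with $\pi_1(c)-\pi_2(c)$ also $e\bigl(\pi_1(c)-\pi_2(c)\bigr)=0$. Consequently $\pi_1(c)-\pi_2(c)=e^\perp\bigl(\pi_1(c)-\pi_2(c)\bigr)e^\perp\in e^\perp Me^\perp$, which is $(ii)$.

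For $(ii)\Rightarrow(i)$, suppose a projection $e$ as in $(ii)$ is given. The inclusion $(\pi_1-\pi_2)(C)\subset e^\perp Me^\perp$ says precisely that $e\pi_1(c)=e\pi_2(c)$ and $\pi_1(c)e=\pi_2(c)e$ for all $c\in C$, while $\phi(A)\subset eMe$ gives $\phi(a)=e\phi(a)e$ for all $a\in A$. Starting from $\phi(c_1ac_2)=\pi_1(c_1)\phi(a)\pi_1(c_2)$, valid since $\phi$ is a $C$-bimodule map with respect to $\pi_1$, I would insert $\phi(a)=e\phi(a)e$, regroup as $\bigl(\pi_1(c_1)e\bigr)\phi(a)\bigl(e\pi_1(c_2)\bigr)$, replace $\pi_1(c_1)e$ by $\pi_2(c_1)e$ and $e\pi_1(c_2)$ by $e\pi_2(c_2)$, and regroup back to $\pi_2(c_1)\phi(a)\pi_2(c_2)$; this is a one-line computation and yields $(i)$.

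Once Proposition~\ref{e1} is available the whole argument is essentially bookkeeping, and I do not anticipate a serious obstacle. The one point that deserves attention is that the projection $e$ must be taken to be the support projection of $\phi(1)$, which is \emph{independent} of the homomorphism with respect to which $\phi$ is regarded as a bimodule map, so that the same $e$ works simultaneously for $\pi_1$ and $\pi_2$; the only analytic ingredient needed is the standard fact that an operator annihilating $b\in M_+$ also annihilates its support projection.
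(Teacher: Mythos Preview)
Your argument is correct and follows essentially the same route as the paper: both directions hinge on the support projection $e$ of $\phi(1)$ and on Proposition~\ref{e1}. The only variation is in $(i)\Rightarrow(ii)$, where the paper invokes the extension $\psi$ from the proof of Proposition~\ref{e1} to obtain $e\pi_1(c)e=e\pi_2(c)e$, whereas you bypass $\psi$ entirely and deduce $(\pi_1(c)-\pi_2(c))e=0$ directly from $(\pi_1(c)-\pi_2(c))b=0$ via the support-projection property---a slight but legitimate shortcut.
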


\begin{proof}  
Suppose firstly that $\phi\colon A\to M$ is a $C$-bimodule map with respect to $\pi_2$. 
Then as in the proof of Proposition \ref{e1},  the support 
projection $e$ of $\phi(1)$ lies in $\pi_1(C)^\prime\cap \pi_2(C)^\prime$ 
and the map $\psi\colon A\to eMe$
is a completely positive extension of the maps 
$({\pi_1})_e$ and  $({\pi_2})_e$. 
Consequently, for all $c\in C$, 
\begin{align*}
\pi_1(c) - \pi_2(c) &= e\pi_1(c)e +e^\perp\pi_1(c)e^\perp - 
e\pi_2(c)e -e^\perp\pi_2(c)e^\perp \\
&= e^\perp(\pi_1(c)-\pi_2(c))e^\perp,
\end{align*}
and, therefore, 
$(\pi_1-\pi_2)(C)\subset e^\perp Me^\perp$.

Conversely, let    
$e\in  \pi_1(C)^\prime\cap \pi_2(C)^\prime$ 
be a projection such that $\phi(A)\subset eMe$ and $(\pi_1-\pi_2)(C)\subset e^\perp Me^\perp$.
Then given any $c\in C$ there exists $z\in  e^\perp Me^\perp$ such that
$\pi_1(c)=\pi_2(c) +z$. Hence for all $a\in A$, 
\begin{align*}
\phi(ca)=\pi_1(c)\phi(a)&=(\pi_2(c)+z)\phi(a)\\
&=\pi_2(c)\phi(a)+z\phi(a)\\
&=\pi_2(c)\phi(a)+e^\perp z e^\perp \phi(a).
\end{align*}
Since $\phi$ is valued in $eMe$, this implies
$$
\phi(ca) =\pi_2(c)\phi(a).
$$
Similarly we have
$$
\phi(ac)=\phi(a)\pi_2(c).
$$
\end{proof}

Our second observation is about the non unital case. Let $A$ be a non unital 
$C^*$-algebra. Let $M(A)$ denote its multiplier algebra, that we may regard as a unital
sub-$C^*$-algebra of $A^{**}$ (see e.g. \cite[2.6.7]{BLM}). Let $C\subset M(A)$
be a unital $C^*$-algebra, let $M$ be a von Neumann algebra and let 
$\pi\colon C\to M$ be a unital $*$-homomorphism. The definition
of a $C$-bimodule map $u\colon A\to M$ 
with respect to $\pi$ as given by (\ref{Bim}) extends to that setting. Then 
Theorem \ref{main} extends as follows.

\begin{corollary}
Let $u\colon A\to M$ be a $C$-bimodule map (with respect to $\pi$) and assume that 
$u$ is decomposable. If $\pi$ admits a completely positive extension
$A^{**}\to M$ then there exist $C$-bimodule completely positive maps
$u_1,u_2,u_3,u_4\colon A\to M$ such that 
$u=(u_1-u_2) + i(u_3 -u_4)$.
\end{corollary}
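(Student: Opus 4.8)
The plan is to reduce to Theorem~\ref{main} applied with $A$ replaced by its bidual $A^{**}$. Recall that $A^{**}$ is a von Neumann algebra, hence in particular a unital $C^*$-algebra, and that the inclusions $C\subset M(A)\subset A^{**}$ exhibit $C$ as a unital sub-$C^*$-algebra of $A^{**}$ containing the unit; moreover the hypothesis provides a completely positive extension of $\pi$ to $A^{**}$. So it suffices to produce a decomposable $C$-bimodule map $\widehat{u}\colon A^{**}\to M$ (with respect to $\pi$) which extends $u$. Granting this, Theorem~\ref{main} yields $C$-bimodule completely positive maps $v_1,v_2,v_3,v_4\colon A^{**}\to M$ with $\widehat{u}=(v_1-v_2)+i(v_3-v_4)$, and I would let $u_j\colon A\to M$ be the restriction of $v_j$ to $A$. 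Each $u_j$ is completely positive, and since $c_1ac_2\in A$ whenever $a\in A$ and $c_1,c_2\in C$ (because $A$ is an ideal of $M(A)$), we get $u_j(c_1ac_2)=v_j(c_1ac_2)=\pi(c_1)v_j(a)\pi(c_2)=\pi(c_1)u_j(a)\pi(c_2)$, so $u_j$ is a $C$-bimodule map with respect to $\pi$; finally $u=(u_1-u_2)+i(u_3-u_4)$ because $\widehat{u}$ extends $u$.

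To construct $\widehat{u}$, I would take the $w^*$-continuous extension of $u$. Since $M$ is a von Neumann algebra, the bounded map $u$ admits a unique $w^*$-continuous extension $\widehat{u}\colon A^{**}\to M$. Writing $u=(u_1-u_2)+i(u_3-u_4)$ with each $u_j$ completely positive, one has $\widehat{u}=(\widehat{u}_1-\widehat{u}_2)+i(\widehat{u}_3-\widehat{u}_4)$, where $\widehat{u}_j\colon A^{**}\to M$ denotes the $w^*$-continuous extension of $u_j$; and $\widehat{u}_j$ is completely positive, being the composition of the completely positive bitranspose $u_j^{**}\colon A^{**}\to M^{**}$ with the canonical normal $*$-homomorphism $M^{**}\to M$ that restricts to the identity on $M$. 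Hence $\widehat{u}$ is decomposable.

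Next I would check that $\widehat{u}$ is a $C$-bimodule map with respect to $\pi$. Fix $c_1,c_2\in C$, regarded as elements of $A^{**}$. Both maps $x\mapsto\widehat{u}(c_1xc_2)$ and $x\mapsto\pi(c_1)\widehat{u}(x)\pi(c_2)$ from $A^{**}$ into $M$ are $w^*$-continuous, because multiplication by a fixed element is separately $w^*$-continuous in $A^{**}$ and in $M$, and $\widehat{u}$ is $w^*$-continuous. For $a\in A$ we have $c_1ac_2\in A$, whence
$$
\widehat{u}(c_1ac_2)=u(c_1ac_2)=\pi(c_1)u(a)\pi(c_2)=\pi(c_1)\widehat{u}(a)\pi(c_2).
$$
Since $A$ is $w^*$-dense in $A^{**}$, the two maps coincide on all of $A^{**}$, so $\widehat{u}$ is a $C$-bimodule map. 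Applying Theorem~\ref{main} and carrying out the restriction step described above then finishes the proof.

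The routine but slightly delicate ingredient is the behaviour under biduality: a bounded (respectively completely positive) map into a von Neumann algebra has a unique $w^*$-continuous extension to the bidual, and that extension is again bounded (respectively completely positive); and the bimodule identity, valid on the $w^*$-dense subalgebra $A$, propagates to all of $A^{**}$ because of the separate $w^*$-continuity of the Arens product. No idea is needed beyond Theorem~\ref{main} itself.
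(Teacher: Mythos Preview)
Your proof is correct and follows essentially the same route as the paper: extend $u$ to its unique $w^*$-continuous extension $A^{**}\to M$, verify this extension is decomposable and a $C$-bimodule map (using separate $w^*$-continuity of the product and $w^*$-density of $A$ in $A^{**}$), apply Theorem~\ref{main}, and restrict back to $A$. Your treatment of decomposability of $\widehat{u}$ is slightly more explicit than the paper's (which simply asserts it is easy to check), but the underlying argument is the same.
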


\begin{proof} Let $\widetilde{u}\colon A^{**}\to M$ be the unique 
$w^*$-continuous extension of $u$ (see e.g. \cite[Lem. A.2.2]{BLM}.
It is easy to check that $\widetilde{u}$ is decomposable, with 
$\decnorm{\widetilde{u}}=\decnorm{u}$. Since the product is separately 
$w^*$-continuous on von Neumann algebras, $\widetilde{u}$ is 
a $C$-bimodule map with respect to $\pi$. Indeed let $c_1,c_2\in C$,
let $\eta\in A^{**}$ and let $(a_i)_i$ be a net in $A$ such that
$a_i\to\eta$ in the $w^*$-topology of $A^{**}$. Then 
$c_1a_ic_2\to c_1\eta c_2$ in the $w^*$-topology of $A^{**}$, hence
$$
\widetilde{u}(c_1\eta c_2) =w^*-\lim_i u(c_1a_ic_2).
$$
Further $u(c_1a_ic_2)=\pi(c_1)u(a_i)\pi(c_2)$ and $u(a_i)
\to\widetilde{u}(\eta)$ in the $w^*$-topology of $M$. Hence
$$
w^*-\lim_i \pi(c_1)u(a_i)\pi(c_2) = \pi(c_1)\widetilde{u}(\eta)\pi(c_2).
$$
This yields $\widetilde{u}(c_1\eta c_2)=
\pi(c_1)\widetilde{u}(\eta)\pi(c_2)$.

Applying Theorem \ref{main} to $\widetilde{u}$ we obtain a decomposition 
of that map into completely positive $C$-bimodule maps 
$A^{**}\to M$. Restricting to $A$, we find the desired decomposition of $u$.
\end{proof}

Our final observation is that the decomposability problem considered in this paper 
always has a positive solution if $\pi$ is a $*$-isomorphism. This is a direct
consequence of a remarkable projection result of Christensen-Sinclair \cite{CS}. 
We thank \'Eric Ricard for pointing out this
result to us.

\begin{proposition} Let $A,C,M$ as in Theorem \ref{main} and let $\pi\colon C\to M$ is a $*$-isomorphism. Then 
for any decomposable and $C$-bimodule map $u\colon A\to M$, there exist 
$C$-bimodule completely positive maps $u_1,u_2,u_3,u_4\colon A\to M$ such that
$u=(u_1-u_2) +i(u_3-u_4)$.
\end{proposition}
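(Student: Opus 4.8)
The plan is to reduce this to Theorem \ref{main} by verifying that the extension hypothesis is automatically satisfied when $\pi$ is a $*$-isomorphism of $C$ onto a sub-$C^*$-algebra of $M$. The point is that, since $\pi$ is a $*$-isomorphism onto its range $\pi(C)\subset M$, the identity map $\pi(C)\to\pi(C)\subset M$ has a completely positive extension to $A$ precisely when $\pi(C)$ is completely complemented in $M$ by a completely positive projection compatible with the inclusion; and this is exactly what the Christensen--Sinclair projection theorem \cite{CS} provides. More precisely, I would invoke the fact that a von Neumann subalgebra (or, in the form relevant here, a sub-$C^*$-algebra that is the range of a normal conditional-expectation-like map) admits a completely bounded, indeed completely positive, projection from $M$; one then composes $\pi^{-1}\colon\pi(C)\to C$ — wait, rather: one uses the projection $E\colon M\to\pi(C)$ and sets $\widehat{\pi}=E|_A$ composed with nothing, since $\pi(C)\subset M$ already, so $\widehat\pi\colon A\to M$ is completely positive and restricts to $E|_C$, which is the identity on $\pi(C)$, i.e.\ restricts to $\pi$ on $C$. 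Thus $\pi$ admits a completely positive extension $A\to M$, and Theorem \ref{main} applies verbatim to give the desired decomposition $u=(u_1-u_2)+i(u_3-u_4)$ into $C$-bimodule completely positive maps.

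The key steps, in order, are: (1) identify $C$ with its image $\pi(C)\subset M$ via the $*$-isomorphism $\pi$, so that $\pi$ becomes the inclusion map; (2) quote the Christensen--Sinclair result \cite{CS} to produce a completely positive (unital, since everything is unital) projection $E\colon M\to\pi(C)$; (3) observe that $\widehat\pi:=E|_A\colon A\to M$ is completely positive and satisfies $\widehat\pi|_C=\pi$ because $E$ restricted to $\pi(C)$ is the identity; (4) apply Theorem \ref{main} with this $\widehat\pi$ to conclude. Each step after the first is essentially a one-line citation or verification.

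The main obstacle — really the only nontrivial input — is step (2): precisely which projection result of Christensen--Sinclair is being invoked and checking that its hypotheses are met in the present unital $C^*$-algebra setting (in particular that $\pi(C)$ is weak-$*$ closed or that the relevant completely bounded projection can be taken completely positive, and that no separability or normality assumption is needed). I would state it in the form: if $N$ is a von Neumann algebra and $B\subset N$ is a sub-$C^*$-algebra which is the image of a completely bounded $N$-bimodule projection, or more simply a von Neumann subalgebra, then there is a completely positive projection $N\to B$; alternatively, one passes to the weak-$*$ closures, using that $M$ is a von Neumann algebra and that $\pi$ being a $*$-isomorphism extends to a normal $*$-isomorphism on bidouals, so that the relevant subalgebra to project onto is weak-$*$ closed. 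Once that projection is in hand, the rest is immediate from Theorem \ref{main}.
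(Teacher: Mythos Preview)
Your approach has a genuine gap at step (3): the map $E\colon M\to\pi(C)$ is defined on $M$, but $A$ is not contained in $M$, so the restriction $E|_A$ makes no sense. In the setup of the proposition one has $C\subset A$ and $\pi\colon C\to M$; there is no embedding of $A$ into $M$ available, so a projection of $M$ onto $\pi(C)$ gives you no way whatsoever to produce a map out of $A$.

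More fundamentally, the whole strategy of reducing to Theorem \ref{main} by verifying its extension hypothesis cannot succeed. When $\pi\colon C\to M$ is a $*$-isomorphism onto $M$, a completely positive extension $\widehat\pi\colon A\to M$ of $\pi$ is, after identifying $M$ with $C$ via $\pi^{-1}$, precisely a completely positive projection of $A$ onto $C$. Such a projection need not exist: take $C=M$ to be any non-injective von Neumann algebra (for instance $L(\Fdb_2)$) represented on a Hilbert space $H$, let $A=B(H)$, and let $\pi$ be the identity. Then no completely positive projection $B(H)\to M$ exists, so the hypothesis of Theorem \ref{main} fails, yet the proposition still applies to this situation. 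Thus the proposition is genuinely not a special case of Theorem \ref{main}.

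The paper's argument does not go through Theorem \ref{main} at all, and the Christensen--Sinclair result it invokes is not a conditional expectation onto $\pi(C)$ but a projection at the level of maps. Namely, \cite[Thm.~4.1]{CS} (applied with $\theta=\pi^{-1}$) yields a contractive idempotent $Q\colon CB(A,M)\to CB(A,M)$ whose range is exactly the space of $C$-bimodule completely bounded maps and which sends completely positive maps to completely positive maps. Given a decomposable $C$-bimodule map $u$, write $u=(v_1-v_2)+i(v_3-v_4)$ with each $v_j$ completely positive but not necessarily bimodule; since $Q(u)=u$, one obtains $u=(Q(v_1)-Q(v_2))+i(Q(v_3)-Q(v_4))$ with each $Q(v_j)$ completely positive and $C$-bimodule. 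This bypasses the extension question entirely.
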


\begin{proof} Let $CB(A,M)$ be the space of completely bounded maps from 
$A$ into $M$, equipped with $\cbnorm{\ }$. 
Let $BIMOD(A,M)\subset CB(A,M)$ be the subspace of $C$-bimodule
completely bounded maps. According to \cite[Thm. 4.1]{CS} (to be applied with
$\theta=\pi^{-1}$), there exists a contractive idempotent map 
$Q\colon CB(A,M)\to CB(A,M)$ with range equal to
$BIMOD(A,M)$, such that $Q(v)$ is completely positive for any 
completely positive $v\colon A\to M$.

Let $u\colon A\to M$ be decomposable and write it as $u=(v_1-v_2) +i(v_3-v_4)$ 
for some completely positive maps $v_j\colon A\to M$.
Then $Q(u)=(Q(v_1)-Q(v_2)) +i(Q(v_3)-Q(v_4))$ and each $Q(v_j)$ is a completely
positive and $C$-bimodule map. If $u$ is assumed to be $C$-bimodule,
then $Q(u)=u$ and the above decomposition proves the result.
\end{proof}

\bigskip

\end{document}